\newcommand{\C}{\mathbb{C}}
\newcommand{\Z}{\mathbb{Z}}
\newcommand{\R}{\mathbb{R}}
\newcommand{\tr}{\mathrm{tr\,}}
\newlength{\dhatheight}
\newcommand{\CP}{{\mathbb{C} \mathbb{P}}}
\def\picill#1by#2(#3)#4
\vfill\special{illustration #3 scaled #4}}}
\newtheorem{theorem}{Theorem}[section]
\newtheorem{lemma}[theorem]{Lemma}
\theoremstyle{definition}
\theoremstyle{remark}
\newtheorem{remark}[theorem]{Remark}
\begin{document}

\title{A remark on first integrals of vector fields}

\author[A.~Belotto da Silva]{Andr\'e Belotto da Silva}
\address{Universit\'e de Paris, Institut de Math\'ematiques de Jussieu Paris Rive Gauche, IMJ-PRG, CNRS 7586, B\^at. Sophie Germain, Place Aur\'elie Nemours, F-75013, Paris, France.}
\email{\href{mailto:belotto@imj-prg.fr}{belotto@imj-prg.fr}}
%\urladdr{\url{https://andrebelotto.com}}

\author[M. Klime\v{s}]{Martin Klime\v{s}}
\address{University of Zagreb, Faculty of Electrical Engineering and Computing, Unska 3, 10000 Zagreb, Croatia.}
\email{\href{mailto:martin.klimes@fer.hr}{martin.klimes@fer.hr}}

\author[J. Rebelo]{Julio Rebelo}
\address{Institut de Math\'ematiques de Toulouse ; UMR 5219, Universit\'e de Toulouse, 118 Route de Narbonne, F-31062 Toulouse, France.}
\email{\href{mailto:rebelo@math.univ-toulouse.fr}{rebelo@math.univ-toulouse.fr}}

\author[H. Reis]{Helena Reis}
\address{Centro de Matem\'atica da Universidade do Porto, Faculdade de Economia da Universidade do Porto, Portugal.}
\email{\href{mailto:hreis@fep.up.pt}{hreis@fep.up.pt}}

%\author{Andr\'e Belotto da Silva, \, \, Martin Klimes, \, \, Julio C. Rebelo \, \, \& \, \, Helena Reis}
%\thanks{}

\subjclass[2010]{Primary 32S65; Secondary 32M25.}
\keywords{Holomorphic vector-field; first integral; formal power series; Stokes phenomena.}

\begin{abstract}
We provide examples of vector fields on $(\C^3, 0)$ admitting a formal first integral but no holomorphic first integral.
These examples are related to a question raised by D. Cerveau and motivated by the celebrated theorems of Malgrange \cite{Malg} and Mattei-Moussu \cite{MM}.
%Motivated by the local analysis of a saddle-node singularity appearing in a convenient birational model for Airy equation, 
\end{abstract}

\maketitle
\setcounter{tocdepth}{1}

\section{Introduction}
A celebrated theorem due to Mattei and Moussu \cite{MM} states that a holomorphic codimension~$1$ foliation admitting a formal first integral necessarily possesses a holomorphic first integral as well. The theorem and its proof completely clarify the relationship between formal and holomorphic first integrals for codimension~$1$ foliations, whereas the general investigation of the existence of these first integrals also includes an influential work of Malgrange \cite{Malg1}. For higher codimension foliations, the relationship between formal and holomorphic first integrals remains quite mysterious. In this context, D. Cerveau naturally asked whether a holomorphic vector field $X$ defined on a neighborhood of the origin of $\C^3$ and admitting one - or two - formal first integral must possess holomorphic first integrals as well. The goal of this paper is to show that the existence of a single
formal first integral is not
enough to guarantee the existence of holomorphic ones and this will be done by means of the following theorem:

%(that is, there exists $F\in \C[[x]]$ such that $\omega \wedge dF \equiv 0$)

%In the later, Malgrange proves that if $\omega$ admits a strong formal first integral (that is, there exists $F,\,G\in \C[[x]]$ such that formally $\omega =G\,  dF $), then it admits an holomorphic (strong) first integral.
%. It can be seen as an improvement of a previous result of Malgrange \cite{Malg1}, which states that if $\omega$ admits a strong formal first integral (that is, there exists $F,\,G\in \C[[x]]$ such that $\omega =G\,  dF $), then it admits an holomorphic (strong) first integral.

%In this context, Malgrange extended his result about strong first integrals to codimension~$r$ foliations \cite{Malg}, under the hypothesis of existence of $r$ ``independent" formal first integrals, that is, the existence of $r$ first integrals $F_1,\ldots,F_r \in \C[[x]]$ such that $dF_1 \wedge \cdots \wedge dF_r \not\equiv 0$. 
%Note that in the later work it is necessary to assume the existence of at least $r$ ``independent" formal first integrals, that is, the existence of $r$ first integrals $F_1,\ldots,F_r \in \C[[x]]$ such that $dF_1 \wedge \cdots \wedge dF_r \not\equiv 0$.
%Based on these considerations, 

\begin{theorem}
\label{Cerveau_theexample}
Consider the family $X_{a,b,c}$ of vector fields on $\C^3$ defined by
\begin{equation}\label{eq:vf}
X_{a,b,c}= x^2 \frac{\partial}{\partial x} + (1+ax)\left[ y_1\frac{\partial}{\partial y_1}-
y_2 \frac{\partial}{\partial y_2} \right] +  bxy_2\frac{\partial}{\partial y_1} + cxy_1\frac{\partial}{\partial y_2} \, ,
\end{equation}
where $a$, $b$, and $c$ are complex parameters. Assume that the parameters  are such that
\[
\cos (2\pi a) \neq \cos (2\pi\sqrt{a^2+bc}) \, .
\]
Then the vector field $X_{a,b,c}$ possesses no (non-constant) holomorphic first integrals, albeit it does
possess formal first integrals.
\end{theorem}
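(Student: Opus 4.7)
The plan is to re-interpret $X_{a,b,c}$ as a non-autonomous linear system in $(y_1, y_2)$ with $x$ playing the role of time. Using the first component $x^2\partial_x$ to parametrise orbits by $x$, integral curves satisfy
\[
\frac{dy}{dx} = \Bigl(\frac{A_0}{x^2} + \frac{A_1}{x}\Bigr) y, \qquad A_0 = \mathrm{diag}(1,-1), \quad A_1 = \begin{pmatrix} a & b \\ c & -a \end{pmatrix},
\]
an irregular singular linear system of Poincar\'e rank one at $x = 0$ (with distinct leading eigenvalues $\pm 1$), and Fuchsian at $x = \infty$ with residue $-A_1$. By the Hukuhara--Turrittin theorem there is a formal gauge transformation $u = \hat P(x)^{-1} y$ with $\hat P \in \GL(2, \C[[x]])$, $\hat P(0) = I$, bringing the system to its formal normal form $u_i' = (-1)^{i+1}(x^{-2} + ax^{-1})u_i$. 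In the formal coordinates $(x, u_1, u_2)$ the vector field reads $x^2 \partial_x + (1+ax)(u_1 \partial_{u_1} - u_2 \partial_{u_2})$, for which $u_1 u_2$ is manifestly a first integral; pulling back along $\hat P$ yields a non-trivial formal first integral $\hat H \in \C[[x, y_1, y_2]]$ of $X_{a,b,c}$, quadratic in $(y_1, y_2)$ with leading term $y_1 y_2$.

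For the non-existence of a holomorphic first integral, suppose $H$ is non-constant and holomorphic near the origin. Since $X_{a,b,c}$ preserves the $(y_1, y_2)$-grading, each homogeneous component $H_n$ in $y$ is again a first integral, and since $H_0$ must be constant, some $H_n$ with $n \geq 1$ is non-trivial. Restriction of $H_n$ to a transversal $\{x = x_0\}$, $x_0 \neq 0$, gives a polynomial in $(y_1, y_2)$ invariant under the monodromy $M_0$ of the linear system around $x = 0$. Because the linear system is Fuchsian at infinity with residue $-A_1$, the topological constraint on $\CP^1$ identifies $M_0$ (up to conjugation) with $\exp(2\pi i A_1)$, whose eigenvalues are $e^{\pm 2\pi i \sqrt{a^2 + bc}}$. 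In the generic case $\sqrt{a^2+bc} \notin \tfrac{1}{2}\Z$, the $M_0$-invariants in $\mathrm{Sym}^n \C^2$ form a one-dimensional subspace when $n$ is even (spanned in diagonalising coordinates by $(z_1 z_2)^{n/2}$) and vanish when $n$ is odd. Hence $H_n$ coincides, up to scalar, with the $n$-th homogeneous component of $\hat H^{n/2}$, and the existence of any non-zero analytic $H_n$ forces the formal series $(u_1 u_2)^{n/2}$ to be convergent as an element of $\C[[x, y_1, y_2]]$.

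The main obstacle, and the heart of the proof, is the Stokes-theoretic argument showing that $(u_1 u_2)^n$ is divergent for every $n \geq 1$ under the hypothesis. The Stokes data of the linear system at $x = 0$ consist of two unipotent Stokes matrices
\[
S_1 = \begin{pmatrix} 1 & s_1 \\ 0 & 1 \end{pmatrix}, \qquad S_2 = \begin{pmatrix} 1 & 0 \\ s_2 & 1 \end{pmatrix},
\]
together with the formal monodromy $e^{2\pi i \, \mathrm{diag}(a, -a)}$; matching the product with the monodromy at infinity gives
\[
\tr\bigl(e^{2\pi i \, \mathrm{diag}(a,-a)} S_1 S_2\bigr) = e^{2\pi i a}(1 + s_1 s_2) + e^{-2\pi i a} = 2\cos(2\pi \sqrt{a^2+bc}).
\]
Thus $\cos(2\pi a) \neq \cos(2\pi \sqrt{a^2+bc})$ is equivalent to $s_1 s_2 \neq 0$, forcing both Stokes matrices to be non-trivial. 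Now $S_1$ acts on the formal sectorial solutions by $u_1 \mapsto u_1 + s_1 u_2$, $u_2 \mapsto u_2$, and therefore sends $(u_1 u_2)^n$ to $(u_1 u_2 + s_1 u_2^2)^n = (u_1 u_2)^n + n s_1 u_1^{n-1} u_2^{n+1} + \cdots$, a non-zero correction for every $n \geq 1$; the same holds for $S_2$. Consequently the two sectorial Borel sums of $(u_1 u_2)^n$ across a Stokes line disagree, so the formal series cannot be resummed to a single analytic function on a neighbourhood of $x = 0$, and is in particular divergent. This forces $H_n \equiv 0$ for every $n \geq 1$, contradicting non-constancy of $H$. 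The residual resonant cases with $\sqrt{a^2+bc} \in \tfrac{1}{2}\Z$ (if compatible with the hypothesis) are handled by direct inspection of the extra $M_0$-invariants that appear, which again reduce to the same Stokes obstruction.
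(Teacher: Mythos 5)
Your overall strategy is the same as the paper's: pass to the rank-one irregular linear system, diagonalize it formally to produce the formal first integral $\hat H$ with leading part $y_1y_2$, identify the hypothesis $\cos (2\pi a) \neq \cos (2\pi\sqrt{a^2+bc})$ with the non-vanishing of the product of the two Stokes multipliers via the trace of the monodromy computed once through $S_0$, $S_\pi$ and the formal monodromy and once through the Fuchsian point at infinity, and then rule out holomorphic first integrals by the non-invariance of $(u_1u_2)^n$ under the Stokes operators. All of these steps are correct and match the paper, which also disposes of the resonant case $2\sqrt{a^2+bc}\in\Z$ by continuity of $\tr M$ in the parameters --- cleaner than your ``direct inspection'' of extra invariants.

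The one step that does not hold up as written is the identification ``$H_n$ coincides, up to scalar, with $\hat H^{n/2}$''. You derive it from the count of $M_0$-invariants in $\mathrm{Sym}^n\,\C^2$, but (a) that space is one-dimensional for even $n$ only under a stronger non-resonance than $\sqrt{a^2+bc}\notin\tfrac12\Z$ (for $\sqrt{a^2+bc}$ rational there are extra invariants such as $z_1^n$ or $z_2^n$ for suitable $n$), and, more seriously, (b) the distinguished invariant $(z_1z_2)^{n/2}$ lives in coordinates diagonalizing $M_0=S_0NS_\pi$, which differ from the formal/sectorial coordinates $u$ precisely by the Stokes matrices you are trying to detect; so pinning down $H_n|_{x=x_0}$ up to scalar does not identify $H_n$ with the formal series $\hat H^{n/2}$. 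The bridge you need is the paper's Lemma~\ref{lemma:FirstIntegrals}: an elementary computation with the diagonal model $x^2\partial_x+(1+ax)\left(u_1\partial_{u_1}-u_2\partial_{u_2}\right)$ showing that every \emph{formal} first integral factors through $u_1u_2$. Applying this to $H_n$, regarded as a formal first integral and transported by $\hat T$, gives $H_n=c\,\hat H^{n/2}$ as formal series, after which your divergence argument (the sectorial sums of $\hat H^{n/2}$ disagree across a Stokes ray whenever $s_\beta\neq0$, so the series cannot converge) closes the proof. With that lemma inserted, your homogeneous decomposition in $y$ is a serviceable and essentially equivalent variant of the paper's argument.
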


In particular, the vector field $X_{1,1,1}$ obtained by setting $a=b=c=1$ admits a formal first integral but no holomorphic one.
The existence of these examples was certainly expected, given the transcendental behavior of singular foliations, but we highlight the
simplicity of its expression which suggests that this might be a fairly common phenomena in applications. The issue is therefore also
related to Malgrange's theorem in \cite{Malg} in that they confirm that some (strong) additional assumptions are, in fact, needed
(see below for further information).
As a side note, the simple nature of the examples provided by Theorem~\ref{Cerveau_theexample} also bears some similarities with
certain results quoted in the survey article \cite{stolovitch} by Stolovitch: many normalization results for ``simple'' vector fields
having formulas not too different
from $X_{a,b,c}$ are presented under some additional geometric condition (for example ``volume-preserving'' or ``hamiltonian''). We might wonder
what is the influence of these conditions on the problem discussed here. Conversely, it is also fair to wonder if
the (potential) ability to turn formal first integrals into holomorphic ones may shed new light in more general normal form problems.

As a matter of fact, our observation of the vector field \eqref{eq:vf} is a by-product of our study of the global dynamics of the Airy and Painlev\'{e} I and II equations \cite{BKRR}. Our original motivation was the local analysis of the saddle-node singularity associated with the vector-field 
\[
Y_A= - \frac{1}{2} x^4 \frac{\partial}{\partial x} + \left(z-\frac{1}{2}x^3y\right)
\frac{\partial}{\partial y} + (y- x^3z) \frac{\partial}{\partial z} \,,
\]
which appears in a convenient birational model for the compactified Airy equation. The formal normal form of $Y_A$ as well as the
corresponding Stokes phenomenon can accurately be computed with the same technique detailed in Section~2 for the
vector field $X_{a,b,c}$. In doing so, there follows
that $Y_A$ admits a first integral in the field of fractions of formal power series, i.e., there is a formal first integral of the form
$F/G$ with $F,\,G \in \C [[ x,y,z ]]$. Yet $Y_A$ has no holomorphic or meromorphic first integral.
Basically, the difference between the example provided by $Y_A$
and Cerveau's general questions lies in the fact the ``formal first integral'' of the vector field $Y_A$ has a ``meromorphic'' nature rather than a more standard power series representation without negative terms. In turn, there are deep differences between first integrals of ``holomorphic'' and of ``meromorphic''
natures as already underlined in the topological context. In fact, in codimension~$1$, Mattei-Moussu \cite{MM} theorem asserts that first integrals
are topological invariants and the existence of formal first integrals implies the existence of holomorphic ones. On the other hand, the existence of
meromorphic first integrals is not a topological invariant already in the two-dimensional ambient case, cf. \cite{Cerveau,Martine,P-R}.
Similarly, in codimension~$2$ complete integrability in the holomorphic sense is not a topological invariant either \cite{P-R}.
From this point of view, the vector-field $Y_A$ falls genuinely short of shedding light into Cerveau's questions due to the nature of its formal
first integral.

It is now interesting to investigate whether a holomorphic vector field $X$ defined on a neighborhood of the origin of $\C^3$ and admitting two formal first integrals $F_1,\,F_2$ such that $dF_1 \wedge dF_2 \not\equiv 0$, necessarily admits at least one holomorphic first integral. The best result in this direction, as far as we are aware of, remains the previously mentioned
theorem of Malgrange \cite{Malg} concerning Pfaffian systems in arbitrary dimensions. More precisely, given a codimension $r$ foliation defined in some open set of $\mathbb{C}^n$ and generated by $r$ one-forms $\Omega = \{\omega_1,\ldots,\omega_r\}$, denote by $S(\Omega)$ the singular locus of $\Omega$, that is, the set of points where the $r$-form $\omega_1 \wedge \ldots \wedge \omega_r$ is identically zero. We say that $\Omega$ is integrable (respectively formally integrable) at $x\in \mathbb{C}^n$, if there exists $r$ holomorphic function germs $f_1,\ldots,f_r \in \mathcal{O}_x$ (respectively $r$ formal power series in $ \widehat{\mathcal{O}}_x$) such that the module generated by $\{df_1, \ldots,df_r\}$ coincides with $\Omega \cdot \mathcal{O}_{x}$ (respectively, with $\Omega \cdot \widehat{\mathcal{O}}_{x}$). In \cite{Malg}, Malgrange shows that if $S(\Omega)$ has codimension $3$, or if $\Omega$ is formally integrable and $S(\Omega)$ has codimension $2$, then $\Omega$ is integrable. As mentioned, these hypotheses are generally quite strong when we consider a Pfaffian system obtained as the dual of a vector field.

The proof of Theorem~\ref{Cerveau_theexample} relies on the standard theory of linear systems (normal forms and Stokes phenomena among others).
We refer the reader to \cite[$\S$16 and 20]{IY} and references there-within (or \cite{Balser,Wasow}) for an
introduction to the methods used in this work.

%We will provide more details in \cite{BKRR}.

%In this context, Malgrange extended his result about strong first integrals to codimension~$r$ foliations \cite{Malg}, under the hypothesis of existence of $r$ ``independent" formal first integrals, that is, the existence of $r$ first integrals $F_1,\ldots,F_r \in \C[[x]]$ such that $dF_1 \wedge \cdots \wedge dF_r \not\equiv 0$. 
%Note that in the later work it is necessary to assume the existence of at least $r$ ``independent" formal first integrals, that is, the existence of $r$ first integrals $F_1,\ldots,F_r \in \C[[x]]$ such that $dF_1 \wedge \cdots \wedge dF_r \not\equiv 0$.
%Based on these considerations, 

%It is somehow striking that Theorem \ref{Cerveau_theexample} was not previously known. The existence of such an example was certainly expected given the transcendental behavior of singular foliations, but might have been missed in previous works. 

\medskip
\noindent
\textbf{Acknowledgment.} H. Reis was
partially supported by CMUP, which is financed by national funds through FCT – Fundação para a Ciência e Tecnologia, I.P.,
under the project with reference UIDB/00144/2020. J. Rebelo and H. Reis are also partially supported by CIMI through the project
``Complex dynamics of group actions, Halphen and Painlev\'e systems''.

\section{Proof of Theorem {\ref{Cerveau_theexample}}}\label{sec:2}

Let us begin our approach to Theorem~\ref{Cerveau_theexample} by noticing that the vector field
$X_{a,b,c}$ is associated with the time-dependent linear differential system
\begin{equation}\label{eq:system}
x^2\frac{dy}{dx}=
\begin{bmatrix} 1+ax & bx \\
cx & -1-ax
\end{bmatrix}
y,\qquad y=
\begin{bmatrix}
y_1 \\ y_2
\end{bmatrix} \, .
\end{equation}
Following classical terminology of linear systems,
the system above has a non-resonant irregular singular point of Poincar\'e rank 1 at $x=0$, see for example \cite[$\S$~20]{IY}.
Note that this differential system is in the so-called Birkhoff normal form: the system is well defined for all $x\in\CP^1$ and
has only two singular points, namely $x=0$ and $x = \infty$, see e.g. \cite[$\S$~20B]{IY}.
In turn, the singularity at $x=\infty$ is a {\it Fuchsian one}. In other words, the system has a simple pole at $x=\infty$,
see e.g. \cite[Definition~16.9]{IY}.
In addition, since the linear system~(\ref{eq:system}) is non-resonant, it can formally be transformed into a diagonal linear system by means of the
standard Poincar\'{e}-Dulac method \cite[Theorem~20.7]{IY}. Whereas the resulting (formal) power series is divergent, Sibuya's Theorem
asserts that it is Borel 1-summable in all
directions $x \in e^{i\alpha} \R_{>0}$ with exception of the singular directions, namely the directions corresponding to $\alpha \in \pi \Z$.
The preceding is made accurate by the lemma below:

\begin{lemma}\label{lem:ChangeCoordinates}
There exists a formal linear change of coordinates having the form $y=\hat T(x)u$, with $\hat T(0)=I$, which conjugates system~(\ref{eq:system})
to the (diagonal) linear system
\begin{equation}\label{eq:diagonalsystem}
x^2\frac{du}{dx}=\begin{bmatrix} 1+ax & 0 \\ 0 & -1-ax \end{bmatrix}u, \qquad u=
\begin{bmatrix}
u_1 \\ u_2
\end{bmatrix}.
\end{equation}
Moreover, for every $\alpha \in\ ]0, \pi[\ \cup\ ]\pi, 2\pi[$, there exists a holomorphic transformation $y = T_\alpha(x)u$ conjugating
systems~(\ref{eq:system}) and~(\ref{eq:diagonalsystem}) and satisfying the following conditions:
\begin{itemize}
\item[(a)] $T_\alpha(x)$ is analytic on the open sector of opening angle $\pi$ bisected by the half-line
$e^{i\alpha}\R_{>0}$.

\item[(b)] $T_\alpha(x)$ and $T_\beta(x)$, with $\alpha < \beta$, coincide on the intersection of the corresponding half planes provided that the interval $]\alpha, \beta[$ does not contains an integral multiple of $\pi$.

\item[(c)] $T_\alpha(x)$ is asymptotic to $\hat T(x)$.
\end{itemize}

\end{lemma}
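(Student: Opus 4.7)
The approach is to invoke the classical theory of linear systems at a non-resonant irregular singular point of Poincar\'e rank~$1$, cf.\ \cite[$\S$~20]{IY}. Denote by $A(x)$ the coefficient matrix in~\eqref{eq:system} and set $A_0(x) = \mathrm{diag}(1+ax,-1-ax)$. A gauge transformation $y = T(x) u$ takes~\eqref{eq:system} into~\eqref{eq:diagonalsystem} if and only if $A(x) T(x) - T(x) A_0(x) = x^2 T'(x)$. For the formal part, I would write $\hat T(x) = I + \sum_{k \geq 1} T_k x^k$ and substitute into this gauge equation. Matching coefficients at order $x^k$ yields a recursion of the shape $[B_0, T_k] = R_k(T_0,\ldots,T_{k-1})$, with $B_0 = \mathrm{diag}(1,-1)$ and $R_k$ an explicit polynomial expression involving the parameters $a$, $b$, $c$. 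The operator $[B_0,\cdot]$ is invertible on strictly off-diagonal $2\times 2$ matrices (precisely because $B_0$ has distinct eigenvalues $\pm 1$, i.e.\ non-resonance), which determines the off-diagonal entries of each $T_k$; the diagonal entries of $T_{k-1}$ are then pinned down by the requirement that the diagonal part of the equation at order $x^k$ vanish. This produces a unique formal series $\hat T$, which is in general divergent.

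For the sectorial realization, I would invoke Sibuya's theorem (cf.\ \cite[Theorem~20.16]{IY}), according to which $\hat T$ is of Gevrey order $1$ and admits a Borel $1$-sum in every direction avoiding the finitely many Stokes directions. These Stokes directions are those $\alpha$ along which the exponential factors $\exp\bigl((\lambda_i-\lambda_j)/x\bigr)$ fail to decay; for the eigenvalues $\lambda_1 = 1$, $\lambda_2 = -1$ of the leading diagonal term of~\eqref{eq:system}, the eigenvalue differences equal $\pm 2 \in \R$, so the Stokes rays are precisely $\arg x \in \pi\Z$. For each $\alpha \in (0,\pi) \cup (\pi, 2\pi)$, I would define $T_\alpha(x)$ to be the Borel $1$-sum of $\hat T$ in the direction $e^{i\alpha}\R_{>0}$; it is then holomorphic on the open sector of opening $\pi$ bisected by that half-line, asymptotic to $\hat T$ there, and conjugates~\eqref{eq:system} to~\eqref{eq:diagonalsystem} because Borel summation is compatible with the algebraic manipulations entering the gauge equation. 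This yields items (a) and (c).

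Property~(b) is then essentially built into Borel summation: if $\,]\alpha,\beta[\,$ contains no Stokes direction, rotating the Laplace integration ray from $\alpha$ to $\beta$ is an analytic operation within the maximal sector of summability, hence $T_\alpha$ and $T_\beta$ are restrictions of a single analytic function on a larger common sector and must agree on their overlap. I do not expect any substantive obstacle: the entire argument proceeds by applying well-established theorems once the correct structure of the linear system is recognized. The sole numerical point worth verifying is that the leading eigenvalue difference for~\eqref{eq:system} equals $\pm 2 \in \R$, which identifies the Stokes directions as $\arg x \in \pi\Z$, in accordance with the statement.
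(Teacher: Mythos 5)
Your proposal is correct and follows essentially the same route as the paper: both rest on the classical Poincar\'e--Dulac formal diagonalization at a non-resonant rank-$1$ irregular singularity together with Sibuya's sectorial summability theorem (\cite[Theorems~20.7 and~20.16]{IY}), and your identification of the singular directions $\arg x\in\pi\Z$ from the eigenvalue difference $\pm 2$ matches the paper's. The only (inessential) difference is how the specific diagonal $\pm(1+ax)$ is justified: you obtain it from the consistency of the diagonal part of the gauge-equation recursion at order $x$, whereas the paper reads it off as the $1$-jet of the eigenvalue functions $\pm\sqrt{(1+ax)^2+bcx^2}$, the only formal invariant in Poincar\'e rank $1$.
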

\begin{proof}
As previously stated, the existence of a formal change of variables conjugating systems~(\ref{eq:system}) and~(\ref{eq:diagonalsystem}),
as well as its analytic nature on the indicated sectors, goes back to classical results by Birkhoff and Malmquist (or more general versions
by Hukuhara, Turittin, and Sibuya, see \cite[Theorems~20.7 and~20.16]{IY}).
Therefore it only remains to check that the diagonal matrix appearing in~(\ref{eq:diagonalsystem}) has the indicated form.
To do this,
note that the formal invariants of the initial system~(\ref{eq:system}) can be read off a suitable finite jet of the eigenvalue functions
associated with the matrix
\[
\begin{bmatrix} 1+ax & bx \\
cx & -1-ax
\end{bmatrix}.
\]
Clearly these eigenvalue functions are equal to
$ \pm \sqrt{\left(1+ax\right)^2 + bcx^2}$. Now, since the Poincar\'{e} rank of the singularity is $1$, only the 1-jet of the eigenvalue function
is a formal invariant, c.f. \cite[Proposition 20.2]{IY}. Therefore $\pm(1+ax)$ are the only formal invariants of the system. This completes
the proof of the lemma.
\end{proof}

Next, note that the system~(\ref{eq:diagonalsystem}) clearly admits
\begin{equation}\label{eq:fundamentalSolution}
U(x)=\begin{bmatrix} x^a e^{-1/x} & 0 \\ 0 &  x^{-a} e^{1/x} \end{bmatrix} 
\end{equation}
as fundamental (matrix) solution.
Consider one of the two singular directions, namely $\beta = 0$ or $\beta = \pi$. Let $T_{\beta+}$ and
$T_{\beta-}$ denote, respectively, the Borel sums on the ``left'' and on the ``right'' of the fixed singular direction $\beta$.
Then, there is a constant matrix $S_{\beta}$ satisfying
\[
T_{\beta-}(x)=T_{\beta+}(x)  U(x)S_\beta U(x)^{-1} \, ,
\]
for $x\in e^{i\beta}\R_{>0}$.
The matrices $S_0$ and $S_{\pi}$ are called the {\it Stokes matrices}\, and they have the general forms
\[
S_0=\begin{bmatrix}
1 & s_0 \\
0 & 1
\end{bmatrix} \qquad \text{and} \qquad S_\pi = \begin{bmatrix}
1 & 0 \\
s_{\pi} & 1
\end{bmatrix} \, ,
\]
for suitable constants $s_0, s_{\pi} \in \C$, see e.g. \cite[$\S$~20G]{IY}. In the particular case in question, explicit formulas
for $s_0$ and $s_{\pi}$ are known, see \cite[pages 86 and 87]{BJL}.
However, for our purposes, it suffices to prove that:

\begin{lemma}\label{lem:NonTrivialStokes}
The product $s_0 s_\pi \neq 0 $ if and only if $\cos (2\pi a) \neq \cos (2\pi\sqrt{a^2+bc})$.
\end{lemma}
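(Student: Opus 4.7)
The plan is to compute the trace of the monodromy of system~(\ref{eq:system}) in two different ways and equate them. First I would observe that the coefficient matrix in~(\ref{eq:system}) is traceless, so every fundamental solution has constant determinant; consequently, the monodromies $\mathcal{M}_0$ and $\mathcal{M}_\infty$ around the two singular points $x=0$ and $x=\infty$ both lie in $\operatorname{SL}_2(\C)$. Since $\CP^1$ carries no other singular points of the system, $\mathcal{M}_0$ and $\mathcal{M}_\infty$ are inverses of one another (up to orientation conventions), and using that their determinants equal $1$ I obtain $\tr(\mathcal{M}_0) = \tr(\mathcal{M}_\infty)$.

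Next, to compute $\tr(\mathcal{M}_\infty)$ I would perform the change of variable $x = 1/\xi$. System~(\ref{eq:system}) becomes Fuchsian at $\xi = 0$ with residue matrix $\left[\begin{smallmatrix}-a & -b \\ -c & a\end{smallmatrix}\right]$, whose eigenvalues are $\pm\sqrt{a^2+bc}$. Since at a Fuchsian singularity the eigenvalues of the monodromy are the exponentials of the residue eigenvalues, this yields $\tr(\mathcal{M}_\infty) = 2\cos\bigl(2\pi\sqrt{a^2+bc}\bigr)$. To compute $\tr(\mathcal{M}_0)$, I would invoke Lemma~\ref{lem:ChangeCoordinates}: analytically continuing the canonical fundamental solution $Y_\alpha(x) = T_\alpha(x) U(x)$ once around $x=0$ crosses both singular directions $\beta=0$ and $\beta=\pi$, contributing the Stokes factors $S_0$ and $S_\pi$, while the factor $U(x)$ in~(\ref{eq:fundamentalSolution}) undergoes the formal monodromy $\Lambda = \operatorname{diag}(e^{2\pi i a}, e^{-2\pi i a})$. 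Hence $\mathcal{M}_0$ is conjugate to a product of the shape $\Lambda \cdot S_\pi \cdot S_0$, and a direct matrix multiplication gives
\[
\tr(\mathcal{M}_0) \;=\; 2\cos(2\pi a) + e^{-2\pi i a}\, s_0 s_\pi.
\]

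Equating the two expressions for the trace would then yield
\[
e^{-2\pi i a}\, s_0 s_\pi \;=\; 2\bigl[\cos(2\pi\sqrt{a^2+bc}) - \cos(2\pi a)\bigr],
\]
and since $e^{-2\pi i a}\neq 0$ the desired equivalence $s_0 s_\pi \neq 0 \iff \cos(2\pi a) \neq \cos(2\pi\sqrt{a^2+bc})$ follows immediately. The only delicate point I foresee is the bookkeeping of Stokes conventions: depending on the orientation of the loop around $x=0$ and on the order in which the two singular rays are crossed, the exact nonzero prefactor multiplying $s_0 s_\pi$ in the trace formula may change (for instance become $e^{2\pi i a}$), and $S_0$ and $S_\pi$ may be interchanged. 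However, since the conclusion only concerns the non-vanishing of $s_0 s_\pi$, this convention-matching is harmless, and I do not expect any deeper obstacle.
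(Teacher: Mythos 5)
Your proposal is correct and follows essentially the same route as the paper: the paper likewise equates two computations of $\tr M$, obtaining $M\sim S_0 N S_\pi$ with $\tr M = 2\cos(2\pi a)+e^{-2\pi i a}s_0 s_\pi$ from the Stokes data at $x=0$, and $\tr M = 2\cos(2\pi\sqrt{a^2+bc})$ from the Fuchsian point at $x=\infty$. The only point the paper treats with a bit more care is the resonant case $2\sqrt{a^2+bc}\in\Z$ at infinity (handled there by continuity in the parameters $a,b,c$), which your appeal to the general statement about Fuchsian monodromy eigenvalues also covers.
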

\begin{proof}
The lemma will be proved by explicitly computing the monodromy matrix $M$ associated with the system~(\ref{eq:system}) around $x=0$
in two different ways: first we compute the matrix directly around $x=0$ by using the Stokes matrices and then we will compute the monodromy (holonomy)
around $x=\infty$ which is a Fuchsian singular point. The monodromy around $x=\infty$ is the inverse of the monodromy matrix $M$ since the system
in question has only two singular points (corresponding to $x=0$ and to $x = \infty$). The result will then easily follow by computing the trace
of $M$ in each situation.

\vspace{0.1cm}

\noindent {\it Claim}. The monodromy matrix around the origin is conjugate to $M = S_0 N S_\pi$, where
\[
N=\begin{bmatrix} e^{2\pi i a} & 0 \\ 0 &  e^{-2\pi i a} \end{bmatrix}
\]
is the ``formal monodromy'' of the fundamental matrix solution $U(x)$ introduced in Equation~(\ref{eq:fundamentalSolution}).

\noindent {\it Proof of the claim}. The statement follows from the sequence
of equations
\begin{align*} 
T_{0+}(e^{2\pi i}x)U(e^{2\pi i}x)  &= T_{\pi-}(e^{2\pi i}x) U(e^{2\pi i}x) = T_{\pi+}(e^{2\pi i}x) U(e^{2\pi i}x) S_\pi \\
&=T_{2\pi-}(e^{2\pi i}x) U(e^{2\pi i}x) S_\pi = T_{0-}(x) U(x) N S_\pi = T_{0+}(x) U(x) S_0 N S_\pi \, , 
%\red{\sout{ = Y_{0+}(x) S_0 N S_\pi}}
\end{align*}
where item~(b) of Lemma~\ref{lem:ChangeCoordinates} has implicitly been used.\qed

Since $M = S_0 N S_\pi$, it immediately follows that 
\[
\tr M = 2\cos (2\pi a)+e^{-2\pi i a}s_0s_\pi.
\]

Let us now compute the matrix $M$ by looking at the singular point $x =\infty$. Let $v = 1/x$ so that the system~(\ref{eq:system}) becomes
\begin{equation}\label{eq:system_infty}
v \frac{dy}{dv} = \begin{bmatrix} a + v & b \\ c & -a - v \end{bmatrix}y  = A(v) y\,,
\end{equation}
and note that $v=0$ corresponds to $x =\infty$. Denote by
$\lambda_1$ and $\lambda_2$ the eigenvalues of of the matrix $A(0)$. Naturally the matrix $A(0)$ is the so-called
{\it residue matrix}\, of system~(\ref{eq:system_infty}). Clearly these two eigenvalues are symmetric and, up to relabeling,
we set $\lambda_1 = \lambda$ and $\lambda_2 = -\lambda$ where $\lambda =\sqrt{a^2+bc}$.
In this case, the system is non-resonant if $2 \lambda \notin \mathbb{Z}$, see e.g. \cite[Definition~16.12]{IY}. In turn, provided that there
is no resonance, the system is locally holomorphically equivalent to the Euler system $tv' = A(0)v$, see e.g. \cite[Theorem~16.16]{IY}.
In turn, the monodromy matrix around $v=0$ is conjugate to the exponential of $2\pi i A(0)$ and the latter matrix is conjugate to the inverse of the initial
monodromy matrix $M$. Since traces of matrices remain invariant under conjugations, the preceding finally yields
\[
 \tr M = 2\cos (2 \pi \lambda) \, .
\]
In fact, this last formula holds whether or not the system~(\ref{eq:system_infty}) is resonant as it immediately follows 
from the continuity of $\tr M$ with respect to the parameters $a$, $b$, and $c$ (the set of non-resonant systems is open and dense).
Lemma~\ref{lem:NonTrivialStokes} promptly follows.
\end{proof}

%We postpone the details of the proof of the Lemma to subsection \ref{ssec:NonTrivialStokes}. 

Next, note that the the diagonal differential system~(\ref{eq:diagonalsystem}) is naturally equivalent to the following family of vector fields on $\C^3$:
\begin{equation}\label{eq:vf0}
X_a = x^2 \frac{\partial}{\partial x} + (1+ax)\left[ u_1 \frac{\partial}{\partial u_1}
-u_2 \frac{\partial}{\partial u_2} \right] \, ,
\end{equation}
where $a \in \C$. Clearly vector fields in the family $X_a$ admits the function $h(u_1,u_2) = u_1 u_2$ as a
holomorphic first integral. Furthermore, we have:

\begin{lemma}\label{lemma:FirstIntegrals}
The function $h(u) = u_1u_2$ is a primitive first integral of $X_a$ in the following sense: if $\hat F = \hat F(x,u_1,u_2) \in \C [[ x,u ]]$
is a formal first integral of $X_a$, then there exists a formal power series $\hat G \in \mathbb{C} [[ z ]]$ such that $\hat F =\hat G \circ h$. 
\end{lemma}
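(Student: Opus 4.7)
The plan is to exploit the diagonal structure of $X_a$, which preserves the $\mathbb{C}[[x]]$-module decomposition by bidegree in $(u_1,u_2)$, and to reduce the question to a family of simple scalar ODEs in $x$.

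First, I would expand any formal first integral as
\[
\hat F(x,u_1,u_2) = \sum_{i,j\ge 0} F_{ij}(x)\,u_1^i u_2^j, \qquad F_{ij}\in\mathbb{C}[[x]].
\]
Since $X_a$ acts as $(1+ax)(i-j)$ on the monomial $u_1^i u_2^j$ (with a correction from $x^2\partial_x$ acting on the coefficient), the condition $X_a(\hat F)=0$ decouples bidegree by bidegree into the scalar ODE
\[
x^2 F_{ij}'(x) + (1+ax)(i-j)\,F_{ij}(x) = 0.
\]

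Next, I would analyze this ODE separately in two cases. When $i=j$, it reduces to $F_{ii}'=0$, so each $F_{ii}$ is a constant $c_i\in\mathbb{C}$. When $i\neq j$, the ODE has the actual (nonformal) solutions proportional to $x^{-a(i-j)}e^{(i-j)/x}$, so heuristically no formal power series should survive. To verify this rigorously inside $\mathbb{C}[[x]]$, I would write $F_{ij}(x) = \sum_{k\ge 0} f_k x^k$, substitute into the ODE and compare coefficients: the constant term forces $(i-j)f_0=0$, hence $f_0=0$; the linear term then forces $f_1=0$; and the recursion $f_m = -\bigl(\tfrac{m-1}{i-j}+a\bigr)f_{m-1}$ for $m\ge 2$ propagates zero to all higher $f_m$. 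Thus $F_{ij}\equiv 0$ whenever $i\ne j$.

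Combining these two cases gives
\[
\hat F(x,u_1,u_2) = \sum_{i\ge 0} c_i\,u_1^i u_2^i = \hat G\bigl(u_1 u_2\bigr),\qquad \hat G(z):=\sum_{i\ge 0} c_i z^i\in\mathbb{C}[[z]],
\]
which is the claimed factorization through $h=u_1u_2$.

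I do not expect any serious obstacle here: the proof is entirely formal and the only small point requiring care is ruling out nontrivial formal solutions of the scalar ODE for $i\ne j$, which is an elementary induction on the coefficients of $F_{ij}$. The key structural fact making everything work is that $X_a$ is diagonal in the $(u_1,u_2)$-variables with eigenvalues $\pm(1+ax)$, so there is no mixing between different bidegrees.
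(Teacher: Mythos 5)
Your proof is correct, and it is a genuine (if modest) reorganization of the paper's argument rather than a copy of it. The paper expands $\hat F$ first in powers of $x$, writing $\hat F = \sum_j x^j \sum_{k} u_1^k \hat f_{j,k}(u_1u_2)$, and runs an induction on the $x$-order: at each step the coefficient equation kills the components with $k\neq 0$, and the next step kills the $k=0$ component unless it sits in degree zero. You instead transpose the expansion, decomposing by bidegree $(i,j)$ in $(u_1,u_2)$ with coefficients $F_{ij}\in\C[[x]]$; since $X_a$ preserves bidegree, the equation $X_a(\hat F)=0$ decouples completely into the scalar ODEs $x^2F_{ij}'+(1+ax)(i-j)F_{ij}=0$, which you solve outright: constants when $i=j$, and only the zero formal solution when $i\neq j$ (your coefficient recursion $f_m=-\bigl(\tfrac{m-1}{i-j}+a\bigr)f_{m-1}$ with $f_0=f_1=0$ is the correct and complete verification that the actual solutions $x^{-a(i-j)}e^{(i-j)/x}$ contribute nothing formal). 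What your version buys is full decoupling and the elimination of the induction on $n_0$; what the paper's version buys is that the same induction scheme transfers to settings where the $u$-part is not literally monomial-diagonal but only filtered. Both are elementary coefficient comparisons resting on the same structural fact you identify, namely that $X_a$ acts diagonally with eigenvalue $(1+ax)(i-j)$ on $u_1^iu_2^j$.
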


\begin{proof}
Assume that $\hat F = \hat F(x,u_1,u_2)$ is a formal
first integral of $X_a$ and consider a Taylor expansion of the form:
\begin{equation}
\hat F(x,u_1,u_2) = \sum_{j=0}^{\infty} x^j \hat f_j(u_1,u_2) = \sum_{j=0}^{\infty} x^j \sum_{k \in \mathbb{Z}}u_1^k \hat f_{j,k}(u_1u_2) \, . \label{weirdTaylorexpansion}
\end{equation}

\noindent {\it Claim}. We have $\hat F(x,u_1,u_2) = \hat f_{0,0}(u_1u_2) + o(x^n)$ for all $n\in \mathbb{N}$.

Clearly the lemma is an immediate consequence of the claim so that it suffices to prove the claim.

\noindent {\it Proof of the Claim}. We argue by induction. Assume the claim holds for $n=n_0$ (where the possibility of having $n_0 =0$ is not
excluded). Since $F$ is a formal first integral of $X_a$, a direct computation yields
\[
0 = d_0\hat F.X_a = x^{n_0}\left( \sum_{k \in \mathbb{Z}}k u_1^k \hat f_{n_0,k}(u_1u_2)    \right) + o(x^{n_0+1}) \, .
\]
By comparing monomial degrees, there follows that all the functions $\hat f_{n_0,k}(\cdot)$ must vanish identically provided that $k \neq 0$.
Thus the power series expansion~(\ref{weirdTaylorexpansion}) of $\hat F$ takes on the form
\[
\hat F = \hat f_{0,0} (u_1 u_2) + x^{n_0} \hat f_{n_0,0} (u_1u_2) + \sum_{j=n_0+1}^{\infty} x^j \sum_{k \in \mathbb{Z}}u_1^k \hat f_{j,k}(u_1u_2) \, .
\]
In turn, this refined formula for $\hat F$ yields
\[
0 = d_0\hat F.X_a = x^{n_0+1}\left(n_0\hat f_{n_0,0}(u_1u_2) + \sum_{k \in \mathbb{Z}}k u_1^k \hat f_{n_0+1,k}(u_1u_2)\right)+ o(x^{n_0+2}) \, .
\]
Therefore also $\hat f_{n_0,0}(\cdot)$ must vanish identically unless $n_0 =0$. Hence $\hat F$ is actually of the form $\hat F(x,u_1,u_2) = \hat f_{0,0}(u_1u_2) +
o(x^{n_0+1})$ which establishes the induction step. The proof of the claim is complete and so is the proof of the lemma.
\end{proof}

\begin{remark}
The computation carried out in the proof of Lemma~\ref{lemma:FirstIntegrals} is related to a qualitative issue that is worth pointing out.
For this, note first that
the general solution of the diagonal system~(\ref{eq:diagonalsystem}) has the form
\[
\begin{cases}
u_1 (x) = c_1 e^{-1/x}x^a \\
u_2 (x) = c_2 e^{1/x}x^{-a}
\end{cases}
\]
for suitable constants $c_1, \, c_2 \in \C$. It follows that for any formal first integral $\hat F = \hat F(x,u_1,u_2)$ of $X_a$, the composition $\hat F(x,c_1e^{-\frac{1}{x}}x^a,c_2e^{\frac{1}{x}}x^{-a})$ must be a constant, and hence must factor through $h$ due to the presence of the essential
singular point arising from $e^{\frac{1}{x}}$. 
\end{remark}
%We postpone the details of the proof of the Lemma to subsection \ref{ssec:FirstIntegrals}. 

We are now able to provide the proof of Theorem~\ref{Cerveau_theexample}.

\begin{proof}[Proof of Theorem~\ref{Cerveau_theexample}]
Owing to Lemma~\ref{lem:ChangeCoordinates}, the vector field $X_{a,b,c}$ has a {\it formal first integral} $\hat f = \hat f (x,y_1,y_2)$
which is obtained out of the first integral $h(u_1,u_2)$ of $X_a$ by means of the equation
\[
\hat f(x,\hat T(x)u) = h(u) \, . %\label{equation_definingfirstintegral-f}
\]
Furthermore, according to Lemma~\ref{lemma:FirstIntegrals}, every other
formal first integral of $X_{a,b,c}$ must formally factor through $\hat f$. The proof of Theorem~\ref{Cerveau_theexample} is
then reduced to showing that if a (non-constant) first integral of $X_{a,b,c}$ is holomorphic then we necessarily
have $\cos (2\pi a) = \cos (2\pi\sqrt{a^2+bc})$.

Let us then assume there is a (non-constant) holomorphic first integral $f$ for the vector field $X_{a,b,c}$ defined as in~(\ref{eq:vf}).
It follows from Lemmas \ref{lem:ChangeCoordinates} and \ref{lemma:FirstIntegrals} the existence of a formal series $\hat G \in \C [[ z ]]$
such that $f(x,\hat{T}(x)u) = \hat G \circ h$, where $u=(u_1,u_2)$. Since the formal series on the left side 
is $1$-Borel summable in the variable $x$, while the right hand side is independent of $x$,
we conclude that $\hat G \circ h$ is an analytic function on $u = (u_1, u_2)$. We set $\hat G \circ h = g(u)$.
In particular, there follows that $f_\alpha(x,T_\alpha(x)u) = g(u)$, where $f_\alpha(x,y)$ denotes a sectorial Borel sum.
We thus obtain that $f_{\beta+} = f_{\beta-}$ for both singular directions $\beta=0$ and $\beta=\pi$, where $f_{\beta+}$ (resp. $f_{\beta-}$) stands
as usual for the Borel sum on the ``left'' (resp. ``right'') of the fixed singular
direction $\beta$. Therefore we have
\begin{align*}
	g(u)= & f_{\beta-}(x,T_{\beta-}(x)u) = f_{\beta+}(x,T_{\beta+}(x)U(x)S_\beta U(x)^{-1}u) \\
        = &g(U(x)S_\beta U(x)^{-1}u) \, .
\end{align*}
In other words, the function $g(u)$ is invariant by the Stokes operators
\[
S: \, u \mapsto U(x) S_\beta U(x)^{-1} u \, .
\]
However, it follows from direct computation that the function $g(u)$, which factors through $h(u)=u_1u_2$, is invariant by this
operator only if $s_{\beta} =0$. Hence, the existence of the holomorphic first integral $f$ implies that the product $s_0 s_\pi$ equals zero
so that Lemma \ref{lem:NonTrivialStokes} ensures that we must have $\cos (2\pi a) = \cos (2\pi\sqrt{a^2+bc})$. This ends the
proof of Theorem \ref{Cerveau_theexample}. 
\end{proof}

\end{document}